\theoremstyle{plain} 
\newtheorem{theorem}{Theorem}
\newtheorem{corollary}[theorem]{Corollary}
\newtheorem*{conjecture*}{Conjecture}
\theoremstyle{definition} 
\theoremstyle{definition}
\newtheorem*{remark*}{Remark}
  \renewcommand\section{\@startsection {section}{1}{\z@}%
                                   {-\bigskipamount}%
                                   {\medskipamount}%
                                   {\large\bfseries
                                   \raggedright}}
  \renewcommand\subsection{\@startsection {subsection}{2}{\z@}%
                                   {-\medskipamount}%
                                   {\smallskipamount}%
                                   {\bfseries
                                   \raggedright}}
\renewcommand{\gg}{>\kern-2pt>}
\renewcommand{\ll}{<\kern-2pt<}
\renewcommand{\gg}{>\kern-2pt>}
\renewcommand{\ll}{<\kern-2pt<}
\newcommand{\dd}{\partial}
\renewcommand{\dd}{\operatorname{d}\!}
\newcommand{\ga}{\gamma}
\newcommand{\Ga}{\Gamma}
\newcommand{\D}{\mathcal{D}}
\newcommand{\LD}{\mathcal{L}\!\mathcal{D}}
\renewcommand{\LD}{\mathcal{L}{\kern -1.9pt}\mathcal{D}}
\renewcommand{\LD}{\mathcal{D}}
\renewcommand{\LD}{\mathcal{L}{\kern -4pt}\mathcal{C}}
\renewcommand{\LD}{\mathcal{R}{\kern -3pt}\mathcal{C}}
\renewcommand{\P}{\operatorname{\mathsf{P}}} 
\newcommand{\E}{\operatorname{\mathsf{E}}}
\newcommand{\R}{{\mathbb{R}}}
\newcommand{\C}{\mathbb{C}}
\newcommand{\tE}{{\tilde{E}}}
\renewcommand{\D}{\mathrel{\overset{\operatorname{D}}=}}
\newcommand{\vp}{\varepsilon}
\begin{document}


\title{The exp-normal distribution is infinitely divisible}


\author{Iosif Pinelis}

\address{Department of Mathematical Sciences\\
Michigan Technological University\\
Hough\-ton, Michigan 49931, USA\\
E-mail: ipinelis@mtu.edu}

\keywords{infinitely divisible distributions, convolution, factorization, normal distribution, exp-normal distribution, log-normal distribution, exponential distribution, gamma distribution, multiplicative group}

\subjclass[2010]{Primary 60E07, 42A85; secondary 60E10, 62E10}

%
%

\begin{abstract}
Let $Z$ be a standard normal random variable (r.v.). It is shown that the distribution of the r.v.\  $\ln|Z|$ is infinitely divisible; equivalently, the standard normal distribution considered as the distribution on the multiplicative group over $\R\setminus\{0\}$ is infinitely divisible. 
\end{abstract}

\maketitle




It is easy to see and very well known that the normal distribution is infinitely divisible. 
Thorin \cite{thorin77} showed that the log-normal distribution is infinitely divisible. This 
inspired an outpouring of related activities, results of which were presented in \cite{bondesson-book}. 

The log-normal distribution is the distribution of the random variable (r.v.) $e^X$, where $X$ is a normal r.v.  
In this note, we shall go from the normality in the opposite direction, in a sense. Let $Z$ be a standard normal  r.v. Then the distribution of the r.v.\ $U:=\ln|Z|$ may be referred to as the exp-normal distribution.  
Indeed, then $Z\D\vp e^U$; here and in what follows, $\D$ denotes the equality in distribution and $\vp$ stands for a Rade\-macher r.v., so that $\P(\vp=1)=\P(\vp=-1)=1/2$; we are assuming that $\vp$ is independent of all other r.v.'s mentioned in this note. 
One may observe that the probability density function, say $p$, of the exp-normal distribution is given by the formula 
\begin{equation*}
	p(u)=\sqrt{\frac2\pi}\,\exp\{u-e^{2u}/2\}
\end{equation*}
for all real $u$; so, the left tail of exp-normal distribution is asymptotically exponential, and the right tail is much lighter than any exponential or even normal tail. 

The main result of this note is as follows. 

\begin{theorem}\label{th:}
The exp-normal distribution is infinitely divisible; more specifically, 
\begin{align}
	\ln|Z|&\D\frac{\ln2}2-E_0-\sum_{j=1}^\infty\Big[\frac{E_j}{2j+1}-\frac12\ln\Big(1+\frac1j\Big)\Big]  \label{eq:} \\ 
&=-\frac{\ga+\ln2}2-\tE_0-\sum_{j=1}^\infty\frac{\tE_j}{2j+1}, \notag
\end{align}
where 
$E_0,E_1,\dots$ are independent identically distributed (iid) standard exponential r.v.'s, $\tE_j:=E_j-\E E_j=E_j-1$ are their centered versions, and $\ga$ is Euler's constant. 

Equivalently, the standard normal distribution, considered as the distribution on the multiplicative group over $\R\setminus\{0\}$, is (multiplicatively) infinitely divisible; that is, for each natural $k$ there exist iid r.v.'s $W_1,\dots,W_k$ such that 
\begin{equation}\label{eq:multiplic}
	Z\D W_1\cdots W_k. 
\end{equation}
More specifically, 
\begin{equation*}
	W_1\D \vp\,
	\exp\Big\{\frac{\ln2}{2k}-G_{1/k,0} 
	-\sum_{j=1}^\infty\Big[\frac{G_{1/k,j}}{2j+1}-\frac1{2k}\ln\Big(1+\frac1j\Big)\Big]\Big\}, 
\end{equation*}
where $G_{1/k,0},G_{1/k,1},\dots$ are iid r.v.'s each with the gamma distribution \break  $\operatorname{Gamma}(1/k,1)$ with shape parameter $1/k$ and scale parameter $1$, and $\vp$ is a Rade\-macher r.v.\ 
(independent of $G_{1/k,0},G_{1/k,1},\dots$). 
\end{theorem}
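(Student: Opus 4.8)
The plan is to work on the level of characteristic functions (Fourier transforms of the distribution of $U = \ln|Z|$) and to recognize the resulting expression as a product of characteristic functions of shifted exponentials, thereby exhibiting the Lévy–Khintchine structure explicitly. First I would compute $\E e^{itU} = \E |Z|^{it}$. Since $|Z|^2/2$ has the $\operatorname{Gamma}(1/2,1)$ distribution, a direct computation gives $\E e^{itU} = \E (|Z|^2)^{it/2} = 2^{it/2}\,\Gamma\!\big(\tfrac12 + \tfrac{it}{2}\big)\big/\Gamma\!\big(\tfrac12\big) = 2^{it/2}\,\Gamma\!\big(\tfrac{1+it}{2}\big)/\sqrt{\pi}$. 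So the entire problem reduces to showing that the function $t \mapsto 2^{it/2}\,\Gamma\!\big(\tfrac{1+it}{2}\big)/\sqrt\pi$ is an infinitely divisible characteristic function and to identifying the factorization.

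Next I would invoke the Weierstrass product for the Gamma function, $\frac{1}{\Gamma(z)} = z e^{\ga z}\prod_{j\ge 1}\big(1+\tfrac{z}{j}\big)e^{-z/j}$, applied at $z = \tfrac{1+it}{2}$, together with the value $\Gamma(\tfrac12)=\sqrt\pi$ and the duplication-free observation that the $j=0$ "factor'' is handled by $\Gamma(\tfrac12 + \tfrac{it}{2})$ itself via the $s=\tfrac12$ shift. Taking $-\log$ of $\E e^{itU}$ and expanding yields a series of the form $-\tfrac{it}{2}\ln 2 + (\text{const})\,it + \sum_{j\ge 0}\big[\log\big(1 + \tfrac{it}{2j+1}\big) - \tfrac{it}{2j+1}(\text{correction})\big]$ after reindexing so that the arguments $\tfrac12 + \tfrac{it}{2}, \tfrac32 + \tfrac{it}{2}, \dots$ become $1 + \tfrac{it}{1}, 1 + \tfrac{it}{3}, \dots$ up to positive scalars $\tfrac{1}{2j+1}$. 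The key recognition step is that $\big(1 + \tfrac{it}{\la}\big)^{-1} = \E e^{it E/\la}$ for a standard exponential $E$, i.e. $1/(1-it\theta)$ is the characteristic function of $\operatorname{Exp}(1/\theta)$; hence each factor $\big(1+\tfrac{it}{2j+1}\big)^{-1} e^{-it/(2j+1)\cdot c_j}$ is the characteristic function of $-E_j/(2j+1)$ plus a deterministic shift, and the linear-in-$t$ terms collect the constants (the $-E_j/(2j+1)$ terms each have mean $-\tfrac{1}{2j+1}$, and summing these against the $\ga$ coming from the Weierstrass product and the $\tfrac12\ln(1+1/j)$ convergence-factor corrections must reproduce both displayed right-hand sides). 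Checking that the two forms on the right of \eqref{eq:} agree is the identity $\ga = \sum_{j\ge1}\big[\tfrac{1}{j} - \ln(1+\tfrac1j)\big]$ rearranged; I would verify the bookkeeping by matching means and using $\sum_{j\ge1}\big(\tfrac{1}{2j+1} - \tfrac12\ln(1+\tfrac1j)\big)$ telescoping against $\ln 2$ and $\ga$.

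Once the scalar identity $\E e^{itU} = \prod_{j\ge 0}(\text{c.f. of a shifted }\operatorname{Exp})$ is established with an absolutely convergent product (convergence being guaranteed by the $\tfrac12\ln(1+1/j) = \tfrac{1}{2j}+O(1/j^2)$ versus $\tfrac{1}{2j+1}$ comparison, so the shifts are summable), infinite divisibility is immediate: for any $k$, replace each $\operatorname{Exp}(1)=\operatorname{Gamma}(1,1)$ by a sum of $k$ iid $\operatorname{Gamma}(1/k,1)$'s and split the deterministic shift into $k$ equal pieces, which gives the stated formula for $W_1$ after exponentiating and attaching the independent Radamacher factor $\vp$ to recover $Z \D \vp e^U$. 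The multiplicative reformulation then follows since $W_1\cdots W_k \D \vp_1\cdots\vp_k\, e^{U_1+\cdots+U_k} \D \vp\, e^U \D Z$, using that a product of independent Rademachers is Rademacher.

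The main obstacle I anticipate is purely the constant bookkeeping: ensuring that the various divergent pieces — the $\ga$ from the Weierstrass product, the $-1$ means of the centered exponentials $\tE_j$, the convergence factors $e^{-z/j}$, and the $\tfrac12\ln(1+1/j)$ and $\tfrac{1}{2j+1}$ terms — are grouped into genuinely convergent series in exactly the way that produces the two equivalent right-hand sides of \eqref{eq:}, rather than a formally-correct-but-divergent rearrangement. A clean way to control this is to first write everything with explicit partial sums up to $j=n$, identify the telescoping and the $\Gamma$-ratio at level $n$, and pass to the limit only at the end; the analytic estimates needed ($\log(1+x) = x + O(x^2)$ and Stirling for the tail of the product) are routine.
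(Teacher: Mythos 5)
Your proposal is correct and takes essentially the same route as the paper: compute the c.f.\ of $\ln|Z|$ as $2^{it/2}\,\Gamma\big(\tfrac{1+it}2\big)\big/\Gamma\big(\tfrac12\big)$, expand the Gamma ratio as an infinite product (you use the Weierstrass product where the paper uses Euler's product --- a cosmetic difference that corresponds exactly to the two displayed forms in \eqref{eq:}), recognize each factor as the c.f.\ of a shifted, scaled negative exponential, and get infinite divisibility plus the $W_1$ formula by writing each $\operatorname{Exp}(1)$ as the $k$-fold convolution of $\operatorname{Gamma}(1/k,1)$ and splitting the deterministic shift. One small slip to fix in writing it up: $\E e^{itE/\la}=(1-it/\la)^{-1}$, not $(1+it/\la)^{-1}$, though you immediately use the correct identification of $\big(1+\tfrac{it}{2j+1}\big)^{-1}$ with the c.f.\ of $-E_j/(2j+1)$.
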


\begin{proof}
For the characteristic function (c.f.) $f$ of the r.v.\ $\ln|Z|$ and all real $t$, using the substitution $u=z^2/2$ and Euler's product formula   
\begin{equation*}
	\Ga(z)=\frac1z\,\prod_{j=1}^\infty\frac{(1+\frac1j)^z}{1+\frac zj}
\end{equation*}
for $z\in\C\setminus\{0,-1,-2,\dots\}$ (cf.\ e.g.\ line~2 in the first multi-line display on page~4 in \cite{andrews}), we have 
\begin{align*}
	f(t)=\E e^{it\ln|Z|}&=\int_{-\infty}^\infty e^{it\ln|z|}\frac1{\sqrt{2\pi}}\,e^{-z^2/2}\dd z \\ 
	&=\frac2{\sqrt{2\pi}}\,\int_0^\infty z^{it}e^{-z^2/2}\dd z \\ 
	&=\frac{2^{it/2}}{\sqrt\pi}\,\int_0^\infty u^{it
	/2-1/2}e^{-u}\dd u \\ 
	&=\frac{2^{it/2}}{\sqrt\pi}\,\Ga\Big(\frac{1+it}2\Big)=2^{it/2}\Ga\Big(\frac{1+it}2\Big)
	\Big/\Ga\Big(\frac12\Big) \\ 
	&=\exp\Big\{it\frac{\ln2}2\Big\}\frac1{1+it}\, 
	\prod_{j=1}^\infty\frac{\exp\{\frac{it}2\,\ln(1+\frac1j)\}}{1+\frac{it}{2j+1}}. 
\end{align*}
Now formula \eqref{eq:} follows, because the c.f.\ of an exponential r.v.\ with mean $a>0$ is $t\mapsto\frac1{1-ita}$. Since the standard exponential distribution is infinitely divisible \big(being, for each natural $k$, the $k$-fold convolution of the gamma distribution $\operatorname{Gamma}(1/k,1)$\big), we have proved the first part of of Theorem~\ref{th:}; the second part of the theorem now easily follows. 
\end{proof}

The question of whether the factorization \eqref{eq:multiplic} is true for $k=2$ was asked at site \cite{sqrt-normal}.





\bibliographystyle{abbrv}

\bibliography{P:/pCloudSync/mtu_pCloud_02-02-17/bib_files/citations12.13.12}



\end{document}